\documentclass[A4paper,reqno,oneside,12pt]{amsart}
\usepackage{fullpage}
\usepackage{setspace}
\usepackage{amsmath, amssymb, amsfonts, amsthm, bbm, mathtools, mathrsfs}
\usepackage{tikz}
\usepackage{subcaption}
\usepackage{verbatim,enumitem,graphics}
\usepackage{xcolor}
\usepackage[backref=page]{hyperref}
\hypersetup{%
    colorlinks,
    linkcolor={red!50!black},
    citecolor={green!50!black},
    urlcolor={blue!50!black}
}
\usepackage{dsfont}
\usepackage{lineno}
\usepackage[normalem]{ulem}

\allowdisplaybreaks

\theoremstyle{plain}
\newtheorem{theorem}{Theorem}
\newtheorem*{theorem*}{Theorem}
\newtheorem{lemma}[theorem]{Lemma}

\newtheorem{corollary}[theorem]{Corollary}
\newtheorem*{corollary*}{Corollary}

\newtheorem{problem}[theorem]{Problem}

\theoremstyle{definition}
\newtheorem{claim}[theorem]{Claim}
\newtheorem{remark}[theorem]{Remark}
\newtheorem*{remark*}{Remark}

\newcommand{\N}{\mathbb{N}}

\def\d{\delta}

\def\l{\ell}
\def\e{\epsilon}

\newcommand{\set}[1]{\left\{ #1 \right\}}

\begin{document}

\setstretch{1.27}

\title{Recursive upper bounds for the vertex online Ramsey game with applications to hypergraph Ramsey numbers}

\author{D\'aniel Dob\'ak}
\address{Department of Mathematics, Emory University, Atlanta, GA, 30322, USA}
\email{daniel.dobak@emory.edu}

\author{Eion Mulrenin}
\address{Department of Mathematics, Emory University, Atlanta, GA, 30322, USA}
\email{eion.mulrenin@emory.edu}

\thanks{{\it Keywords}. Ramsey theory, hypergraph Ramsey numbers, online Ramsey numbers.}


\begin{abstract}
    The classical recursive upper bound on hypergraph Ramsey numbers due to Erd\H{o}s and Rado states that for $2 \leq k < s \leq t$,
    \[
        r_k(s,t) \leq 2^{\binom{r_{k-1}(s-1,t-1)}{k-1}}.
    \]
    In 2010, Conlon, Fox, and Sudakov introduced the so-called {\it vertex online Ramsey numbers} $\tilde{r}(s,t)$ for graphs to obtain a quantitative improvement over this bound when $k=3$.
    
    In this note, we show that the natural hypergraph generalization $\tilde{r}_k(s,t)$ of the vertex online Ramsey numbers satisfy an improved recurrence
    \[
        \tilde{r}_k(s,t) \leq 2^{(1+o(1))\tilde{r}_{k-1}(s-1,t-1)}.
    \]
    We obtain several corollaries from this, including a lower-order improvement to the best known quantitative upper bounds for hypergraph Ramsey numbers and an improvement to the above recursive bound of Erd\H{o}s and Rado.
\end{abstract}

\maketitle


\section{Introduction}
\label{section: intro}

For positive integers $k< t_1, \dots, t_q$, we denote by $r_k(t_1, \dots, t_q)$ the $k$-uniform $q$-color Ramsey number of $t_1, \dots, t_q$, that is, the least integer $n$ so that every $q$-coloring of $E(K_n^{(k)})$ contains a copy of $K_{t_c}^{(k)}$ the edges of which all receive color $c$, for some $c \in [q]$.
The study of these numbers goes back to the seminal paper of Ramsey~\cite{Ramsey}, but the problem of finding effective bounds for Ramsey numbers was first studied by Erd\H{o}s and Rado~\cite{ER52}, who 
implicitly proved a recursive bound of the form
\begin{equation}
\label{eq: stepping-down}
    r_k(t_1,\ldots,t_q) \leq 2^{\binom{r_{k-1}(t_1-1,\ldots,t_q-1)}{k-1}}.
\end{equation}
As a corollary, they gave a tower-type upper bound for such hypergraph Ramsey numbers,
greatly improving over the bound from Ramsey's proof, which was not even primitive recursive.
More precisely, we recursively define the {\it tower function} $T_i(x)$ by setting $T_0(x) = x$ and $T_{i+1}(x) = 2^{T_i(x)}$ (thus the subscript is the number of 2's in the tower); the recurrence in~\eqref{eq: stepping-down} for two colors, e.g., then gives that $r_k(s,t) \leq T_{k-1}((2+o(1))(s+t))$.

In this note, we investigate upper bounds for another hypergraph Ramsey problem, namely the {\it $k$-uniform vertex online Ramsey game}.
This was introduced by Conlon, Fox, and Sudakov in 2010~\cite{CFS10} for graphs (i.e., $k=2$) in order to give a quantitative improvement over the bound given by~\eqref{eq: stepping-down} for three-uniform hypergraphs.
A hypergraph analogue of the game was first described by Fox, Pach, Sudakov, and Suk~\cite{FPSS12}, who studied it for monotone paths,
and by Gasarch, Parrish, and Sinai~\cite{GPS}, who studied it for cliques.
Mubayi and Suk~\cite{MS20} also introduced another variant in their work on the Erd\H{o}s--Hajnal hypergraph Ramsey problem.

The $q$-color $k$-uniform game is played by two players, builder and painter, starting on the ordered vertex set $V = \{v_i: i \in \N\}$ of an empty $k$-uniform hypergraph, and proceeds in rounds as follows. 
\begin{itemize}
    \item At the beginning of round $i$, the $i$th vertex $v_i$ in the ordering of $V$ is revealed;
    \item During round $i$, builder can choose to build any number of edges of the form\\
    $\{v_{i_1}, ..., v_{i_{k-1}}, v_i\}$ for $i_1 < \dots < i_{k-1} < i$, in any order;
    \item Whenever builder decides to build an edge, painter must immediately assign it a color from $[q]$;
    \item In each round $i \geq k$, builder must build at least one new edge\footnote{Some versions of the game do not include this last rule, though to our knowledge, every known strategy which gives good bounds adheres to it.};
    \item Builder's goal is to force painter to create a monochromatic $K_{t_c}^{(k)}$ in some color $c \in [q]$, whereas painter's objective is to delay this as much as possible.
\end{itemize}
Since builder always has a winning strategy, namely, to build a clique on $r_k(t_1, \dots, t_q)$ vertices, the {\it $k$-uniform vertex online Ramsey number} $\tilde{r}_k(s,t)$ is then defined to be the minimum number of edges builder must draw to force painter to create a monochromatic $K_{t_c}^{(k)}$ in color $c$ for some $c \in [q]$.

It is worth mentioning that the more general {\it online Ramsey game}, introduced independently by Beck~\cite{B93} and Kurek and Ruci\'nski~\cite{KR05}, differs from the one studied here in that vertices are not revealed one at a time, and instead builder can draw whichever edges she wants at each step. 
For more information on this version, we refer the interested reader to the papers~\cite{C09, CFGH19} and the references therein.

Conlon, Fox, and Sudakov~\cite[Lemma 2.2]{CFS10} proved that for the graph vertex online Ramsey game, builder always has a strategy which uses at most $\binom{s+t-2}{s-1}$ vertices and at most\\
$(s+t-4)\binom{s+t-2}{s-1}$ edges to force painter to draw a red $K_s$ or a blue $K_t$.
In particular,
\begin{equation}
\label{eq: CFS-2-uniform-online}
    \tilde{r}_2(s,t) \leq (s+t-4) \cdot \binom{s+t-2}{s-1},
\end{equation}
and they subsequently use this to give an upper bound\footnote{For off-diagonal Ramsey numbers where $s$ is fixed and $t \to \infty$, they get a further improved bound with a slight alteration to their builder strategy.}
\begin{equation}
\label{eq: CFS-3-uniform}
    r_3(s,t) \leq 2^{\tilde{r}_2(s-1,t-1)} +1 \leq 2^{(s+t-4) \cdot \binom{s+t-2}{s-1}} + 1.
\end{equation}
After finishing the first draft of our paper, we became aware of an exposition by Gasarch, Parrish, and Sinai~\cite{GPS}. In this work, they describe a generalization of the Conlon-Fox-Sudakov method for hypergraphs of higher uniformity, but did not obtain an improvement for hypergraph Ramsey numbers. They subsequently raised the question of whether the vertex online Ramsey game for hypergraphs could be used to obtain a further improvement for $r_k(t,t)$.
Our main result and its first corollary below address this question, giving a recursive bound for $\tilde{r}_k(s,t)$ which yields much sharper recurrence than~\eqref{eq: stepping-down} for hypergraph vertex online Ramsey numbers.
\begin{theorem}
\label{thm: online bound}
    Let $2 \leq k < s, t$ and let $m = \tilde{r}_{k-1}(s-1,t-1)$.
    Then builder has a strategy in the $k$-uniform vertex online Ramsey game for cliques of size $s$ and $t$ using at most $2^m + k - 2$ vertices and at most $m \cdot 2^m$ edges.
    In particular,
    \begin{equation}
    \label{eq: ramsey-bound}
        r_k(s,t) \leq 2^{\tilde{r}_{k-1}(s-1,t-1)} + k - 2
    \end{equation}
    and
    \begin{equation}
    \begin{split}
    \label{eq: online bound}
        \tilde{r}_k(s,t) 
        &\leq \tilde{r}_{k-1}(s-1,t-1) \cdot 2^{\tilde{r}_{k-1}(s-1,t-1)} \\[1 ex]
        &= 2^{(1+o(1))\tilde{r}_{k-1}(s-1,t-1)},
    \end{split}
    \end{equation}
    where the $o(1)$ term goes to $0$ as $\max\set{s,t}\to\infty$.
\end{theorem}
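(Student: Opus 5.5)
We plan to generalize the Erd\H{o}s--Rado / Conlon--Fox--Sudakov stepping-down argument. The builder strategy is built around a \emph{binary tree of vertex sequences}. Fix an optimal builder strategy $\mathcal{S}$ for the $(k-1)$-uniform game for $K^{(k-1)}_{s-1}$ (red) and $K^{(k-1)}_{t-1}$ (blue) that uses at most $m$ edges. Builder begins by revealing $v_1,\dots,v_{k-2}$ without building anything. As further vertices arrive, builder maintains a \emph{current branch}: a sequence of previously revealed vertices $u_1,u_2,\dots$ (beginning with $v_1,\dots,v_{k-2}$) together with an auxiliary $(k-1)$-uniform 2-colored hypergraph $H$ on this sequence.

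A new vertex $v$ is processed as follows. Builder simulates $\mathcal{S}$, treating $v$ as a candidate continuation of the branch, and looks at the $(k-1)$-edges $\mathcal{S}$ would build at the next auxiliary vertex $u_{j}$. For every $(k-2)$-subset $e$ of earlier branch vertices such that $e\cup\{u_j\}$ is such an edge, builder draws the actual $k$-edge $e\cup\{u_j,v\}$. She draws them one at a time. Each painter color on $e\cup\{u_j,v\}$ is copied to $H$ as the color of $e\cup\{u_j\}$, and $\mathcal{S}$ is continued with that answer. The invariant is the usual end-homogeneity property: for every auxiliary edge $f$ of $H$ with color $c$, the vertex $w$ of the branch that follows $\max f$, as well as every later vertex added to the branch, forms a $k$-edge $f\cup\{w\}$ of color $c$.

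To realize this online with vertices arriving in order, we use the standard tree encoding. The $N$-th revealed vertex (for $N>k-2$) is placed by walking down the tree of partial auxiliary plays. At each auxiliary edge that $\mathcal{S}$ asks for along the current path, builder queries the $k$-edge formed with $v$ and branches on painter's answer. The vertex then becomes a new node at the first position where no earlier vertex sits. Since each edge decision branches two ways, and every root-to-leaf path of $\mathcal{S}$ has at most $m$ edges, the tree has at most $2^m$ leaves/nodes that need filling. This gives at most $2^m$ vertices beyond the first $k-2$. Each vertex draws at most $m$ edges, one per query along its path, for a total of at most $m\cdot 2^m$ edges. Every vertex after the $(k-1)$-st draws at least one edge, as the rules require. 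This is the main bookkeeping step.

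Correctness follows from the invariant. Consider any root-to-leaf path, meaning a complete play of $\mathcal{S}$ in which painter's answers are the recorded colors. Along it, $\mathcal{S}$ guarantees a red $K^{(k-1)}_{s-1}$ or a blue $K^{(k-1)}_{t-1}$ in $H$. Let it be red, say, on branch vertices $u_{i_1}<\dots<u_{i_{s-1}}$. Adding the vertex $w$ that was placed as a child at the end of that path gives a red $K^{(k)}_s$, since every $k$-subset either contains $w$ or is $f\cup\{u_{i_{\max}}\}$ built when $u_{i_{\max}}$ was processed. We must check that all $k$-subsets avoiding $w$ were actually queried. This is where the indexing needs care, so that the queried edges are exactly $f\cup\{\text{later vertex}\}$ for the auxiliary edges $f$. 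We expect this to be the main obstacle: arranging that the ``later vertex'' role is played consistently, which may require the auxiliary hypergraph to live one level shifted, as in Conlon--Fox--Sudakov's proof of \eqref{eq: CFS-3-uniform}. The final bounds \eqref{eq: ramsey-bound} and \eqref{eq: online bound} then follow at once, using $\log_2 m=o(m)$ because $m\to\infty$.
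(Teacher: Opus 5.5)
Your strategy is the paper's. The tree of partial auxiliary plays is exactly what the labels $\Phi(v_j)\in[2]^*$ encode, and the $k$-edges $e\cup\{u_j,v\}$ you query are the paper's $e\cup\{v_i,v_j\}$. Your vertex and edge counts are the paper's. Your final argument is Claim~\ref{clm: endgame}: every $k$-subset of $U\cup\{w\}$ either contains $w$ or has the form $f\cup\{u\}$, with $f$ an auxiliary edge inside $U$ and $u$ the largest of the $k$ vertices.

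The step you single out as the main obstacle is not one, and no level shift is needed. It is exactly Lemma~\ref{lem: main}, and it already follows from your own tree encoding. Because $\mathcal{S}$ is deterministic, a vertex's position in the tree records its entire partial play. So any vertex $u$ on $w$'s branch was placed after walking precisely the prefix of $w$'s path up to $u$. At each earlier node it was asked the same auxiliary edges, in the same order, as $w$ was. Painter's answers on the corresponding $k$-edges $f\cup\{u\}$ must equal the colors recorded in $H$, since otherwise $u$ would have gone to a different child and would not lie on $w$'s branch. Hence for every auxiliary edge $f$ whose largest vertex precedes $u$ on the branch, the $k$-edge $f\cup\{u\}$ was built and has color $H(f)$. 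This is your end-homogeneity invariant, and with it your argument is complete.
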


It is worth pointing out that bounds of the form~\eqref{eq: ramsey-bound} were previously observed in several of the aforementioned papers~\cite{CFS10, FPSS12, GPS}.
We decided to include it in the statement of the theorem because it is an immediate consequence of our proof, which takes a slightly different route from this prior work (wherein it was proved by a modification of the Erd\H{o}s--Rado argument), and it will be used in the proofs of our corollaries.
On the other hand,~\eqref{eq: online bound} is, to our knowledge, the first nontrivial recursive-type bound for the hypergraph vertex online Ramsey game, and as such might be considered our main contribution.

This sharper recurrence for hypergraph vertex online Ramsey numbers quickly yields two corollaries.
First of all, we obtain a quantitative improvement to the best upper bound for diagonal hypergraph Ramsey numbers $r_k(t,t)$ as follows.

\begin{corollary}
\label{cor: quantitative-bound}
    Let $3 \leq k < t$. Then
    \begin{equation*}
        r_k(t,t)\leq T_{k-2}\left((1+o(1))2t\binom{2(t-k+1)}{t-k+1}\right).
    \end{equation*}
\end{corollary}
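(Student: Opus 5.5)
Combine \eqref{eq: ramsey-bound} once with \eqref{eq: online bound} iterated $k-3$ times, seeded by the Conlon--Fox--Sudakov graph bound \eqref{eq: CFS-2-uniform-online}. By \eqref{eq: ramsey-bound},
\[
r_k(t,t)\le 2^{\tilde r_{k-1}(t-1,t-1)}+k-2 ,
\]
so it suffices to show that $\tilde r_{k-1}(t-1,t-1)\le T_{k-3}\bigl((1+o(1))\,2t\binom{2(t-k+1)}{t-k+1}\bigr)$. The additive $k-2$ is absorbed into the $(1+o(1))$ inside the tower, since the tower's argument tends to infinity with $t$.

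For $k=3$ the seed is $\tilde r_2(t-2,t-2)$, and \eqref{eq: CFS-2-uniform-online} gives
\[
\tilde r_2(t-2,t-2)\le (2t-8)\binom{2t-6}{t-3}\le 2t\binom{2(t-2)}{t-2}.
\]
For general $k$ the seed is $\tilde r_2(t-k+1,t-k+1)\le (2t-2k-2)\binom{2t-2k}{t-k}$. The binomial coefficient has exactly the required form $\binom{2(t-k+1)}{t-k+1}$ up to a constant factor of about $1/4$. The prefactor $2t-2k-2\le 2t$ only helps, so the seed is bounded by $2t\binom{2(t-k+1)}{t-k+1}$.

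Set $m_j=\tilde r_j(t-k+j-1,\,t-k+j-1)$ for $2\le j\le k-1$. Then \eqref{eq: online bound} gives $m_{j+1}\le m_j2^{m_j}$, and the task is to show by induction that $m_j\le T_{j-2}\bigl((1+o(1))x\bigr)$ with $x=2t\binom{2(t-k+1)}{t-k+1}$. Write $m_j2^{m_j}=2^{m_j+\log_2 m_j}=2^{(1+o(1))m_j}$, since $m_j\to\infty$. If $m_j\le T_{j-2}(y)$, then $m_{j+1}\le 2^{(1+\varepsilon)T_{j-2}(y)}$. The inductive step therefore needs $(1+\varepsilon)T_{j-2}(y)\le T_{j-2}(y')$ with $y'=(1+\varepsilon')y$.

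This error propagation is the main obstacle. A multiplicative error at the top of a tower corresponds to a far smaller multiplicative error at the bottom. Concretely, $(1+\varepsilon)2^{z}=2^{z+\log_2(1+\varepsilon)}$, so each level down the error becomes additive and then vanishingly small relative to the argument.

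I would make this precise by tracking additive errors, $m_{j+1}\le 2^{m_j+\log_2 m_j}$, and pushing the additive term $\log_2 m_j$ down the tower. If $m_j\le T_{j-2}(y)$, then $m_j+\log_2 m_j\le T_{j-2}(y)+T_{j-3}(y)$, and this is at most $T_{j-2}(y+1)$ for $j\ge 3$. For $j=2$ the additive term is $\log_2 y$, which is $o(y)$. Thus each iteration costs at most an additive $O(\log y)$ in the base argument. There are $k-3$ iterations, $k$ is fixed, and $y\to\infty$, so the total cost is $o(y)$, which gives the claimed $(1+o(1))$ factor.

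If $k$ is allowed to grow with $t$, the accumulated additive cost is $O(k\log y)$. This is still $o(y)$ because $y\ge\binom{2(t-k+1)}{t-k+1}$ is at least exponential in $t-k+1\ge 2$. If $t-k$ stays bounded this needs care, but then the bound is trivially weak anyway. I would check that boundary case separately.
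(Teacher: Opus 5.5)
Your plan is the paper's proof: one application of \eqref{eq: ramsey-bound}, then $k-3$ iterations of \eqref{eq: online bound}, seeded by \eqref{eq: CFS-2-uniform-online}. Your additive bookkeeping ($m_{j+1}\le 2^{m_j+\log_2 m_j}$, and $T_i(y)+T_{i-1}(y)\le T_i(y+1)$ for $i\ge 1$) is correct; it spells out the absorption of $(1+o(1))$ factors that the paper does in one line.

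\textbf{Index error.} Your indices are shifted by one. Each use of \eqref{eq: online bound} lowers the uniformity and both clique sizes by one. Starting from $\tilde r_{k-1}(t-1,t-1)$, which you correctly identify as what \eqref{eq: ramsey-bound} requires, you reach $\tilde r_2(t-k+2,t-k+2)$ after $k-3$ steps, not $\tilde r_2(t-k+1,t-k+1)$. For $k=3$ the seed is $\tilde r_2(t-1,t-1)$ itself. Your sequence $m_j=\tilde r_j(t-k+j-1,t-k+j-1)$ ends at $m_{k-1}=\tilde r_{k-1}(t-2,t-2)$, and a bound on that does not bound $\tilde r_{k-1}(t-1,t-1)$. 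Set $m_j=\tilde r_j(t-k+j,t-k+j)$ instead. Then \eqref{eq: CFS-2-uniform-online} gives $m_2\le (2t-2k)\binom{2(t-k+1)}{t-k+1}\le 2t\binom{2(t-k+1)}{t-k+1}$, which is exactly the target. The ``constant factor of about $1/4$'' you noticed came from the shift and is not real slack. With this correction the rest of your argument goes through unchanged for fixed $k$.

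\textbf{Growing $k$.} Your last paragraph should be dropped or redone. Charging $+1$ per level gives an accumulated base error of about $\log_2 y+k$. When $t-k$ stays bounded, $y=\Theta(t)$ and $k=\Theta(t)$, so this is not $o(y)$, and ``the bound is trivially weak anyway'' is not an argument. The paper's proof also tacitly treats $k$ as fixed, so this regime is not needed for the corollary.

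If you want uniformity in $k$, note that every level after the first costs far less than $1$. For $i\ge 1$ and $y\ge 2$ one has $T_i(y)+T_{i-1}(y)\le T_i(y+2y\cdot 2^{-y})$. Hence the total base error is $\log_2 y+O(ky\cdot 2^{-y})=\log_2 y+o(1)$, using $k<t\le y$. The final $+k-2$ is absorbed the same way.
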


The proof follows quickly from Theorem \ref{thm: online bound}, by first applying~\eqref{eq: ramsey-bound} and then iterating~\eqref{eq: online bound} $k-3$ times, finally using the bound \eqref{eq: CFS-2-uniform-online} for graph vertex online Ramsey numbers. 
It also extends more generally to give an improvement for certain regimes of multicolor hypergraph Ramsey numbers, but we omit these details.


\begin{remark}
    This matches the current best upper bound for $r_3(t,t)$ due to Conlon, Fox and Sudakov; for $r_k(t,t)$ when $k \geq 4$, the previously known best upper bounds came from applying the recursion~\eqref{eq: stepping-down} starting with the bound for $r_3(t-k+2,t-k+2)$ in~\eqref{eq: CFS-3-uniform}, giving
    \begin{equation*}
        r_k(t,t) \leq T_{k-2}\left((1+o(1))6t\binom{2(t-k+1)}{t-k+1}\right).
    \end{equation*}
\end{remark}

\begin{remark}
    In light of exciting recent progress on two-uniform diagonal Ramsey numbers~\cite{BBC+, CGMS}, we would also like to point out that 
    small exponential improvements to $r_2(n)$, together with \eqref{eq: stepping-down}, would not yield better bounds for $r_k(t,t)$. 
    In particular, $2^{\binom{r_2(t-1,t-1)}{2}}$ would only be a better bound for $r_3(t,t)$ than that given by Corollary~\ref{cor: quantitative-bound} if $r_2(t,t) \leq (2+o(1))^t$.
\end{remark}

Our second corollary is an exponential improvement on the recursive upper bound of Erd\H{o}s and Rado~\eqref{eq: stepping-down} for roughly half of all valid combinations of $k < s \leq t$.
To our knowledge, this is the first nontrivial improvement to the recursive bound in~\eqref{eq: stepping-down}.
In particular, we prove the following.

\begin{corollary}
\label{cor: recursive-improvement}
    For all $\d \in (0,1]$ and $2 \leq k < s \le t$, at least one of the following two inequalities holds:
    \begin{equation*}
        r_k(s,t) \leq 2^{\frac{\binom{r_{k-1}(t-1,s-1)}{k-1}}{\delta k}}
        \hspace{1cm}
        \text{or}
        \hspace{1cm}
        r_{k+1}(s+1, t+1) \leq 2^{r_k(s, t)^{(1+o(1))\delta k}},
    \end{equation*}
    where the $o(1)$ term goes to $0$ as $t\to \infty$.
\end{corollary}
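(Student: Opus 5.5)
The plan is to combine both parts of Theorem~\ref{thm: online bound} with the trivial bound $\tilde{r}_j(a,b) \le \binom{r_j(a,b)}{j}$, and then split into two cases according to the size of $r_k(s,t)$. Set
\[
B := \binom{r_{k-1}(s-1,t-1)}{k-1} = \binom{r_{k-1}(t-1,s-1)}{k-1}.
\]
This uses the symmetry of Ramsey numbers in their arguments, which is why the statement may write the arguments in either order.

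\textbf{Step 1 (trivial online bound).} I will first check that $\tilde{r}_{k-1}(s-1,t-1) \le B$. Builder reveals $r_{k-1}(s-1,t-1)$ vertices. In each round she builds every $(k-1)$-edge that contains the new vertex together with earlier vertices. This satisfies the rule that at least one edge is built in every round $i \ge k-1$. It uses exactly $B$ edges, and by the definition of the Ramsey number it forces a monochromatic clique.

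\textbf{Step 2 (bounding $\tilde{r}_k(s,t)$).} Let $m = \tilde{r}_{k-1}(s-1,t-1)$. The edge bound in Theorem~\ref{thm: online bound} gives
\[
\tilde{r}_k(s,t) \le m2^m \le B2^B = 2^{(1+\varepsilon)B}, \qquad \varepsilon = \frac{\log_2 B}{B}.
\]
Since $B \ge r_{k-1}(s-1,t-1) \ge t-1$, we have $\varepsilon \to 0$ as $t \to \infty$.

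\textbf{Step 3 (Ramsey bound one uniformity up).} Next I will apply \eqref{eq: ramsey-bound} with $(k,s,t)$ replaced by $(k+1,s+1,t+1)$, which is legitimate since $2 \le k+1 < s+1, t+1$. This gives
\[
r_{k+1}(s+1,t+1) \le 2^{\tilde{r}_k(s,t)} + k - 1.
\]
For $k=2$, Step 2 involves $\tilde{r}_1$. If one prefers not to rely on the uniformity-one version of the theorem, Step 2 can instead use \eqref{eq: CFS-2-uniform-online} directly. That gives $\tilde{r}_2(s,t) \le (s+t-4)\binom{s+t-2}{s-1}$, which should also be $2^{(1+o(1))B}$ in that case; I would verify this separately.

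\textbf{Step 4 (case split).} If $r_k(s,t) \le 2^{B/(\delta k)}$, the first inequality of the corollary holds and we are done. Otherwise $2^B < r_k(s,t)^{\delta k}$. Then Step 2 gives
\[
\tilde{r}_k(s,t) \le 2^{(1+\varepsilon)B} \le r_k(s,t)^{(1+\varepsilon)\delta k}.
\]
Plugging this into Step 3 yields
\[
r_{k+1}(s+1,t+1) \le 2^{r_k(s,t)^{(1+\varepsilon)\delta k}} + k - 1.
\]
The additive $k-1$ can be absorbed into the $o(1)$ in the exponent, because $r_k(s,t) \to \infty$ as $t \to \infty$.

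\textbf{Main obstacle.} The only delicate point is bookkeeping with the $o(1)$ terms. One must check that the error $\varepsilon = (\log_2 B)/B$ tends to $0$ uniformly, depending only on $t$ and not on $\delta$ or $s$. It does, because $B \ge t-1$. One must also check that absorbing the $+(k-1)$ keeps the error uniform, which holds since $r_k(s,t) \ge t$. Everything else is a direct substitution into Theorem~\ref{thm: online bound}.
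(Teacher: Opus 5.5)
Your proof is correct and is essentially the paper's argument. Both combine $\tilde r_k(s,t)\le m2^m\le B2^B$ (via the trivial bound $m\le B$) with $r_{k+1}(s+1,t+1)\le 2^{\tilde r_k(s,t)}+k-1$; the only cosmetic difference is that the paper splits on whether $\tilde r_k(s,t)\le r_k(s,t)^{\mu k}$ with $\mu=\delta(1+\log_2 m/m)$, while you split directly on whether the first inequality holds. (When you absorb the $+k-1$, the justification that is uniform in $\delta$ is that in your second case $r_k(s,t)^{\delta k}>2^B\ge 2^{t-1}$, not merely that $r_k(s,t)\to\infty$.)
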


This result improves the Erd\H{o}s-Rado upper bound $r_k(s,t) \leq 2^{\binom{r_{k-1}(s-1,t-1)}{k-1}}$ in~\eqref{eq: stepping-down}.
Note that for all combinations $2 \leq k < s \leq t$, we either get a small improvement over the bound for $r_k(s,t)$, or a substantial improvement over the bound for $r_{k+1}(s+1, t+1)$.

Corollary \ref{cor: recursive-improvement} also holds more generally for $q$ colors: for this more general statement and proof, see Section \ref{section: corollary proofs}. \\

\noindent
\subsection*{Organization} The rest of the paper is organized as follows:
in Section \ref{section: proof of online bound} we prove Theorem \ref{thm: online bound}, in Section \ref{section: corollary proofs} we prove Corollaries \ref{cor: quantitative-bound} and \ref{cor: recursive-improvement}, and in Section \ref{section: conclusion} we discuss some concluding remarks. 


\section{Proof of Theorem~\ref{thm: online bound}}
\label{section: proof of online bound}

In this section, we state and prove Theorem \ref{thm: online bound} in full generality, for $q$ colors.

\begin{theorem}
    Let $2 \leq k < t_1, \dots, t_q$, and let $m = \tilde{r}_{k-1}(t_1-1, \dots, t_q-1)$.
    Then builder has a strategy in the $q$-color $k$-uniform online Ramsey game for cliques of size $t_1, \dots, t_q$ using at most $q^m+ k - 2$ vertices and at most $m \cdot q^m$ edges.
    In particular,
    \begin{equation} \label{eq: q-color ramsey bound}
        r_k(t_1, \dots, t_q) \leq q^m + k - 2,
    \end{equation}
    and
    \begin{equation} \label{eq: q-color online bound}
        \tilde{r}_k(t_1, \dots, t_q) \leq q^{m + \log_q m}.
    \end{equation}
\end{theorem}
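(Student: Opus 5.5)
We will prove the theorem by building a \emph{$q$-ary decision tree} in the style of the Erd\H{o}s--Rado stepping-down argument. The difference is that the tree is driven by a fixed optimal builder strategy $\mathcal{S}$ for the $(k-1)$-uniform game with targets $t_1-1,\dots,t_q-1$, rather than by all $(k-1)$-subsets of previously seen vertices. Let $\mathcal{S}$ force a monochromatic clique using at most $m$ edges.

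\textbf{Setup.} The first $k-2$ vertices $v_1,\dots,v_{k-2}$ are revealed with no edges built. We will identify the vertices of an auxiliary $(k-1)$-uniform game with a subset of the later vertices. The main device is a labelled tree. Each new vertex $v$ (from $v_{k-1}$ on) is inserted into the tree by walking from the root. Every node $x$ of the tree is a vertex of the hypergraph, and the path from the root to $x$ is a sequence $x_1,x_2,\dots,x_j=x$. Along this path we simulate a play of $\mathcal{S}$ whose $i$-th revealed vertex is $x_i$.

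\textbf{Inserting a new vertex.} When a new vertex $v$ arrives, builder walks down from the root. At a node $x_j$, the simulated game on $x_1,\dots,x_j$ requires $\mathcal{S}$ to build some $(k-1)$-edges containing $x_j$ together with earlier path vertices. For each such $(k-1)$-edge $e$, builder instead builds the $k$-edge $e\cup\{v\}$ and reads off painter's colour. She then treats that colour as painter's answer to $e$ in the simulated game.

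The answers need to be consistent: every later vertex below $x_j$ in the same branch must see the same colours. To guarantee this, the colours of all $k$-edges built from $v$ at node $x_j$ determine the child of $x_j$ into which $v$ descends. Thus each node of the tree corresponds to a full colour pattern of the edges $\mathcal{S}$ draws at that step, and children are indexed by these patterns. If that child is empty, $v$ becomes that child. The key invariant is that for any node $y$ below $x_j$, every edge $e\cup\{y\}$ with $e$ drawn at step $j$ has the colour recorded on the path.

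\textbf{Termination.} Every root-to-leaf path is a complete play of $\mathcal{S}$ with painter's colours as recorded, so it ends after at most $m$ edges. The total number of edge-colour choices along a path is therefore at most $m$. Counting leaves and nodes by edges coloured, the tree has at most $q^m$ nodes, counting nodes by the sequence of colours seen (each node is determined by its colour string, of length at most $m$, with a bounded branching structure). Once a path reaches a node where $\mathcal{S}$ has already won, the simulated game contains a monochromatic $K^{(k-1)}_{t_c-1}$ in colour $c$ on path vertices $x_{i_1},\dots,x_{i_{t_c-1}}$. Take $y$ as the last of these vertices, or in fact any vertex that entered below the last edge. By the invariant, every $(k-1)$-subset $e$ together with the later vertex $y$ has colour $c$. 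This argument needs care: the $k$-edge $e\cup\{y\}$ uses the vertex $y$ deeper than all of $e$. So the clique consists of the $t_c-1$ path vertices together with the next vertex arriving in that branch, giving $K^{(k)}_{t_c}$ in colour $c$. The $k-2$ initial vertices are needed so that the early steps of the game have enough vertices to form $k$-edges, which accounts for the additive $k-2$.

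\textbf{Counting.} The number of vertices is at most $q^m+k-2$, which gives \eqref{eq: q-color ramsey bound}. Each vertex draws at most $m$ edges, namely those along its path, so the total is at most $m q^m=q^{m+\log_q m}$. This gives \eqref{eq: q-color online bound}.

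\textbf{Main obstacle.} The hardest step is making the node count precisely $q^m$. Counting by colour strings naively gives $\sum_{i\le m} q^i$. I would fix this by charging each node to the leaf-extension of its colour string, or by noting that a vertex causing a win need not be stored. The other delicate point is making the clique closure argument rigorous.
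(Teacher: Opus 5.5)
Your proposal is correct and is essentially the paper's argument. Your tree is the trie of labels $\Phi(v_j)$, with $v_1,\dots,v_{k-1}$ serving as the first $k-1$ revealed vertices of every simulated game and $v_{k-1}$ as your root; your descent invariant is Lemma~\ref{lem: main}, and closing the clique with the newly inserted vertex is Claim~\ref{clm: endgame}. For the count, only your second fix works, and it is exactly what the paper does: the game ends at the first vertex whose simulation wins, so every other vertex from $v_k$ on has a distinct nonempty label of length at most $m-1$, giving at most $k-1+\sum_{i=0}^{m-1}q^i\le q^m+k-2$ vertices and at most $m\cdot q^m$ edges.
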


\begin{proof}
Let $2 \leq k < t_1, \dots, t_q$ and $q \geq 2$ be given, and recall that we assume there is a builder strategy in the $(k-1)$-uniform $q$-color vertex online Ramsey game which forces a
monochromatic $K_{t_c-1}^{(k-1)}$ in some color $c \in [q]$ using
at most $m = \tilde{r}_{k-1}(t_1-1,\dots, t_q-1)$ edges against {\it any} painter strategy.
Our goal is to show that builder has a strategy in the $k$-uniform $q$-color vertex online Ramsey game which forces painter to draw a
monochromatic $K_{t_{c'}}^{(k)}$ in some color $c' \in [q]$
and which uses at most $q^m + k-2$ vertices and at most $m \cdot q^m$ edges.
Roughly speaking, the idea of the $k$-uniform builder strategy is that, once the $j$th vertex $v_j$ is revealed to begin the $j$th round, builder will run the $(k-1)$-uniform strategy as a ``sub-routine" on a certain set $R(v_j)$ of previously revealed vertices.
On this set, builder will run this optimal $(k-1)$-uniform strategy against an auxiliary $(k-1)$-uniform online coloring $\Delta_j$ which is determined by painter's $k$-uniform coloring $\Delta$.
By design, once there is a monochromatic $K_{t_c-1}^{(k-1)}$ under one of these auxiliary colorings $\Delta_j$, there will also be a monochromatic $K_{t_c}^{(k)}$ under $\Delta$.

More formally, we define builder's $k$-uniform strategy by recursively constructing a map $\Phi$ from the ordered vertex set $V = \{v_1, v_2, \dots\}$ into 
$[q]^*$, the set of all finite strings over the alphabet $[q]$.
For each $j \geq 1$, the $j$th round of the game begins with the vertex $v_j$ being revealed, and after playing this round, builder will define $\Phi(v_j) \in [q]^*$ and an associated set of previously revealed vertices $R(v_j) \subseteq \{v_1, \dots, v_{j-1}\}$.
$\Phi$ will be an injection on $V \setminus \{v_1, \dots, v_{k-2}\}$, and $R(v_j)$ will be the vertex set on which the $(k-1)$-uniform coloring $\Delta_j$ is defined.

To initialize, let $v_1, ..., v_{k-1}$ be the first $k-1$ vertices revealed in the game, and set
$\Phi(v_j) := \e$, the empty string, for each $j \in [k-1]$.
Now let $j \geq k$ and let $v_j$ be the vertex revealed at the beginning of the $j$th round, and recursively assume that builder has already defined $\Phi(v_i)$ and $R(v_i)$ for all $i < j$.
We define $\Phi(v_j)$ and $R(v_j)$ by the following recursive sub-procedure:
\begin{itemize}
    \item {\bf Initial step.} Set $\Phi_0(v_j) := \e$ and $R_0(v_j) := \emptyset$.
    \item {\bf Recursion step.} Suppose for $f \geq 0$ that builder has already defined $\Phi_f(v_j) \in [q]^*$ and a set of previously revealed vertices $R_f(v_j) \subseteq \{v_1, \dots, v_{j-1}\}$ of size $f$.
    Furthermore, suppose that builder has run $f$ rounds of the $(k-1)$-uniform strategy on this set $R_f(v_j)$ against an auxiliary $(k-1)$-uniform coloring $\Delta_j$ that she has defined, in the order that vertices were added to $R_f(v_j)$.
    
    If there exists a vertex $v_i \notin R_f(v_j)$ with $i < j$ such that $\Phi_f(v_j) = \Phi(v_i)$,
    builder picks $i$ which is minimal with this property\footnote{Strictly speaking, $\Phi$ will be an injection on $V \setminus \{v_1, \dots, v_{k-2}\}$, so this is only necessary for $f \in [k-2]$.}
    and initiates round $f+1$ of the $(k-1)$-uniform vertex online Ramsey game on $R_f(v_j) \cup \{v_i\}$ with $v_i$ as the revealed vertex.
    At each step of the $(f+1)$st round, by the rules of the game, the $(k-1)$-uniform strategy will build an edge $e \cup \{v_i\}$, where $e \subseteq R_f(v_j)^{(k-2)}$, and ask for it to be colored immediately by the auxiliary coloring $\Delta_j$.
    Builder decides the color $\Delta_j(e \cup \{v_i\})$ by first, in the $k$-uniform game, revealing the edge $e \cup \{v_i, v_j\}$ (recall that we are in the $j$th round of the $k$-uniform game) and then having painter assign it a color $\Delta(e \cup \{v_i,v_j\}) \in [q]$ immediately.
    Once this has occurred, builder sets
    \begin{equation*}
        \Delta_j(e \cup \{v_i\}) := \Delta(e \cup \{v_i, v_j\}).
    \end{equation*}
    Now that the edge $e \cup \{v_i\}$ has received a color under $\Delta_j$ in the $(k-1)$-uniform game, we proceed to the next step of round $f+1$ of this game.

    At the conclusion of the $(f+1)$st round, builder sets:
    \begin{itemize}
        \item $R_{f+1}(v_j) := R_f(v_j) \cup \{v_i\}$; and
        \item $\Phi_{f+1}(v_j)$ to be the string in $[q]^*$ obtained by appending, in order, the colors $\Delta_j(e \cup \{v_i\})$ used at each step in the $(f+1)$st round of the $(k-1)$-uniform game to the word $\Phi_f(v_j)$.
    \end{itemize}

    Eventually, for some $f$, there will be no $i < j$ with $\Phi(v_i) = \Phi_f(v_j)$.
    When this happens, builder sets $\Phi(v_j) := \Phi_f(v_j)$ and $R(v_j) := R_f(v_j)$, and ends round $j$ of the $k$-uniform game.
\end{itemize}

\begin{lemma}
\label{lem: main}
    Let $k \leq i < j$, and suppose $v_i \in R(v_j)$.
    Then the following claims hold:
    \begin{itemize}
        \item[(i)] $R(v_i) = \{v_\l \in R(v_j): \l < i\}$; and
        \item[(ii)] If $e \subseteq R(v_i) \cap R(v_j) \overset{(i)}{=} R(v_i)$ has size $k-1$, then builder drew the edge $e \cup \{v_i\}$ if and only if she drew the edge $e \cup \{v_j\}$, and in this event,
        \begin{equation*}
            \Delta(e \cup \{v_i\}) = \Delta(e \cup \{v_j\}). 
        \end{equation*}
    \end{itemize}
\end{lemma}

\begin{proof}
    {\it (i)}
    By construction, $v_i \in R(v_j)$ if and only if there exists $f \geq 0$ with $\Phi_f(v_j) = \Phi(v_i)$, so it's enough to prove that for this same choice of $f$, $R_f(v_j) = R(v_i)$.
    Write $\Phi_f(v_j) = \Phi(v_i)$ as both $w_1 \cdots w_f$ and $w_1' \cdots w_g'$, where $w_h$ and $w_h'$ are, respectively, the substrings appended after round $h$ of the $(k-1)$-uniform game when builder was defining $\Phi_f(v_j)$ and $\Phi(v_i)$.
    Note that since $\Phi$ is an injection on $V \setminus \{v_1, \dots, v_{k-2}\}$, when $k \leq i < j$ we can write
    \begin{equation}
    \label{eq: Rs}
        R_f(v_j) = \Phi^{-1}(w_1) \cup \dots \cup \Phi^{-1}(w_1 \cdots w_f)
        \hspace{0.25cm}
        \text{and}
        \hspace{0.25cm}
        R(v_i) = \Phi^{-1}(w_1') \cup \dots \cup \Phi^{-1}(w_1' \cdots w_g').
    \end{equation}
    We now prove by induction that $f = g$ and $w_h = w_h'$ for all $h \in [f]$, which suffices to prove $R_f(v_j) = R(v_i)$ by~\eqref{eq: Rs}.

    For the base case, note that $w_h = w_h' = \e$ for all $h \in [k-2]$, as builder's $(k-1)$-uniform strategy cannot build any edges until there are at least $k-1$ vertices revealed.
    Now suppose that $w_1 = w_1'$, \dots, $w_h = w_h'$ for some $h$ with $k-2 \leq h < \min\{f,\l\}$, and that moreover, in the first $h$ rounds, builder's $(k-1)$-uniform strategy picked the same edges in the same order against both $\Delta_j$ and $\Delta_i$, and that each such edge was assigned the same color by $\Delta_j$ and $\Delta_i$.
    At this stage, there must have been some vertex $v_\l$ with $k-1 \leq \l < i, j$ such that $\Phi(v_\l) = w_1 \cdots w_h = w_1' \cdots w_h'$, and this $v_\l$ is unique, as again, $\Phi$ is an injection on vertices past the first $k-2$.
    In both $(k-1)$-uniform games, this means $v_\l$ was the vertex revealed at the beginning of the $(h+1)$st round, and $w_{h+1}$ and $w_{h+1}'$ are the sequences of colors that were used by $\Delta_j$ and $\Delta_i$, respectively, on edges of the form $e \cup \{v_\l\}$.

    Since the first $h$ rounds of the $(k-1)$-uniform game against $\Delta_j$ and $\Delta_i$ were identical, when $v_\l$ is revealed to begin the $(h+1)$st round, the deterministic $(k-1)$-uniform strategy will build the same edges, and since $\Phi_f(v_j) = \Phi(v_i)$, it must be that each time a new edge is built, it receives the same color from $\Delta_j$ and $\Delta_i$.
    And again, by the fact that the $(k-1)$-uniform strategy is deterministic, the $(h+1)$st round will conclude in both games at exactly the same step, whence $w_{h+1}=w_{h+1}'$ and the $(h+1)$st round is identical in both games.

    {\it (ii)} now follows immediately from the fact that the first $f$ rounds of both $(k-1)$-uniform games are identical.
\end{proof}

\begin{claim}
\label{clm: endgame}
    If there is a vertex $v_n$ with $\Phi(v_n)$ of length $m$, then painter has drawn a monochromatic $K_{t_c}^{(k)}$ in some color $c \in [q]$ under $\Delta$, and thus the game ends.
\end{claim}

\begin{proof}
    First, note that a label of length $m$ implies that builder's $(k-1)$-uniform strategy built $m$ $(k-1)$-uniform edges in $R(v_n)$, which were colored under $\Delta_n$, and by assumption, this strategy guaranteed a monochromatic $K_{t_c-1}^{(k-1)}$ in some color $c \in [q]$ once $m$ edges were drawn.
    Let $t=t_c$ and let $U = \{u_1, \dots, u_{t-1}\} \subseteq R(v_n)$ be the vertices of this $K_{t-1}^{(k-1)}$ which is monochromatic under $\Delta_n$.
    Our claim is that the set $U \cup \{v_n\}$ induces a monochromatic $K_t^{(k)}$ in color $c$ under $\Delta$.
    First, note that 
    \begin{equation*}
        \Delta(e \cup \{v_n\}) = c
    \end{equation*}
    for all $e \in U^{(k-1)}$;
    indeed, this follows since each such edge $e$ was colored with $c$ by $\Delta_n$.

    Now let $i_1 < \dots < i_k \in [t-1]$.
    Since $u_{i_k} \in R(v_n)$, Lemma~\ref{lem: main}(i) implies that
    $R(u_{i_k}) \supseteq \{u_{i_1}, \dots, u_{i_{k-1}}\}$, and so by Lemma~\ref{lem: main}(ii), since builder drew the edge $\{u_{i_1}, \dots, u_{i_{k-1}}, v_n\}$, she also drew the edge $\{u_{i_1}, \dots, u_{i_k}\}$, and we have
    \begin{equation*}
        \Delta(u_{i_1}, \dots, u_{i_{k-1}}, u_{i_k}) = \Delta(u_{i_1}, \dots, u_{i_{k-1}}, v_n) = c,
    \end{equation*}
    as promised.
\end{proof}

\begin{claim}
\label{clm: injection}
    Let $V = \{v_1, \dots, v_n\}$ be the set of all vertices revealed in the game.
    Then $\Phi$ is an injection from $V \setminus \{v_1, \dots, v_{k-2}, v_n\}$ into $[q]^{\leq m-1}$.
\end{claim}

\begin{proof}
    That $\Phi$ is an injection on this set of vertices is immediate from its construction.
    The claim that the codomain is $[q]^{\leq m-1}$ follows from Claim~\ref{clm: endgame}, since the game ends as soon as vertex $v_n$ receives a label $\Phi(v_n)$ with at least $m$ letters.
\end{proof}

To finish, we just have to upper-bound the total number of vertices and edges we have used.
By Claim~\ref{clm: injection}, we immediately have
\begin{equation*}
    |V| \leq k-1+\sum_{i=0}^{m-1} q^i \leq q^m + k - 2.
\end{equation*}

For the edge count, observe that edges with largest vertex $v_j$ are in one-to-one correspondence with letters in the string $\Phi(v_j)$.
Indeed, if an edge $e = \{v_{i_1}, \dots, v_{i_k}\}$ was built by builder with $i_1 < \dots < i_k$ and painter assigned $\Delta(e) = c$, then the letter $c$ appears in the word $\Phi(v_{i_k})$ at the point when builder's $(k-1)$-uniform strategy picked the edge $e \setminus \{v_{i_k}\}$ during round $i_k$ of the $k$-uniform game.
Thus, the number of edges built by builder in the $k$-uniform game is exactly
\begin{eqnarray*}
    \sum_{j=1}^n |\Phi(v_j)|
    &\leq& m + \sum_{i=1}^{m-1} i \cdot q^i\\
    &\leq& m \cdot q^m,
\end{eqnarray*}
as required.
\end{proof}


\section{Proofs of the Corollaries}
\label{section: corollary proofs}
For the reader's convenience we restate Corollary \ref{cor: quantitative-bound}.

\begin{corollary*}
    Let $3 \leq k < t$. Then
    \begin{equation*}
        r_k(t,t)\leq T_{k-2}\left((1+o(1))2t\binom{2(t-k+1)}{t-k+1}\right).
    \end{equation*}
\end{corollary*}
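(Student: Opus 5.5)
We apply \eqref{eq: ramsey-bound} once, then iterate \eqref{eq: online bound} $k-3$ times, and finish with the Conlon--Fox--Sudakov bound \eqref{eq: CFS-2-uniform-online}. Write $x_j := \tilde{r}_j(t-k+j, t-k+j)$ for $2 \le j \le k-1$. By \eqref{eq: ramsey-bound},
\begin{equation*}
    r_k(t,t) \le 2^{x_{k-1}} + k - 2.
\end{equation*}
For each $3 \le j \le k-1$, applying \eqref{eq: online bound} with uniformity $j$ and clique sizes $t-k+j$ gives
\begin{equation*}
    x_j \le x_{j-1}\, 2^{x_{j-1}} = 2^{x_{j-1} + \log_2 x_{j-1}}.
\end{equation*}
Finally, \eqref{eq: CFS-2-uniform-online} with $s = t' = t-k+2$ gives
\begin{equation*}
    x_2 \le (2(t-k+2)-4)\binom{2(t-k+1)}{t-k+1} \le 2t\binom{2(t-k+1)}{t-k+1} =: N.
\end{equation*}

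The key step is showing that the iterated additive error terms are absorbed into a single $(1+o(1))$ factor inside the top argument of the tower. I will prove by induction on $j$ that $x_j \le T_{j-2}(N + c_j \log_2 N)$ for suitable constants $c_j$. The induction step uses $\log_2 T_{i}(y) = T_{i-1}(y)$, which is much smaller than $T_i(y)$, and for $i \ge 1$ is negligible at the tower level. Concretely, if $x_{j-1} \le T_{j-3}(y)$ with $j-3 \ge 1$, then
\begin{equation*}
    x_{j-1} + \log_2 x_{j-1} \le T_{j-3}(y) + T_{j-4}(y) \le T_{j-3}(y + 1)
\end{equation*}
for $y$ large, since $T_{i}(y+1) \ge 2\,T_i(y)$ for $i \ge 1$. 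Hence each level beyond the first contributes only an additive constant to the innermost argument. At the first level the error is $\log_2 x_2 \le \log_2 N$. Altogether $x_{k-1} \le T_{k-3}(N + \log_2 N + O(k))$, which I would tidy up as $T_{k-3}((1+o(1))N)$.

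Then $r_k(t,t) \le 2^{x_{k-1}} + k - 2 \le T_{k-2}((1+o(1))N)$, where the $+k-2$ is absorbed the same way. The case $k = 3$ needs no iteration: $r_3(t,t) \le 2^{x_2} + 1$.

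The main obstacle is making this error absorption precise. In particular, one must justify that $\log_2 N + O(k) = o(N)$ uniformly in the regime $3 \le k < t$. When $t - k$ is bounded, $N$ is only of order $t$, and the $O(k)$ accumulated constants could be comparable to $N$. I would handle this by noting that the $O(k)$ additive contributions enter only at tower heights $\ge 1$, where they are swamped. For instance, the crude bound $T_i(y) + T_{i-1}(y) \le T_i(y + 2^{-T_{i-1}(y)+1})$ shows the added constant is actually tiny, so only the single $\log_2 N$ term matters. The conclusion then holds with $o(1) \to 0$ as $t \to \infty$.
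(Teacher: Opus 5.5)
Your proposal is correct and follows the paper's proof: apply $r_k(s,t)\le 2^{\tilde r_{k-1}(s-1,t-1)}+k-2$, iterate the online recurrence $k-3$ times, and finish with the Conlon--Fox--Sudakov estimate. The paper simply writes $(1+o(1))$ at each stage without tracking error terms. Your ``$+1$ per level'' bookkeeping, giving $r_k(t,t)\le T_{k-2}(N+\log_2 N+k-3)$, is therefore more than the paper does and already suffices when $k$ is fixed.

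There is one slip in your refinement for uniformity in $k$. The inequality $T_i(y)+T_{i-1}(y)\le T_i\bigl(y+2^{-T_{i-1}(y)+1}\bigr)$ is false for $i=1$, which is the case used at level $j=4$. At $y=10$ the left side is $1034$, while the right side is $2^{10+1/512}\approx 1025.4$. The correct perturbation is $2T_{i-1}(y)\,2^{-T_{i-1}(y)}$, obtained from $2^{\epsilon}\ge 1+\epsilon\ln 2$. This is still at most $2N2^{-N}$ per level, so your conclusion survives.

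There is an even simpler fix: do not discard the slack in $x_2\le 2(t-k)\binom{2(t-k+1)}{t-k+1}=:N'$. Since $N-N'=2k\binom{2(t-k+1)}{t-k+1}\ge 12k$ dominates $\log_2 N'+k-3$, your crude bookkeeping gives $r_k(t,t)\le T_{k-2}(N)$ outright. This holds with no $o(1)$ term and uniformly in $3\le k<t$.
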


\begin{proof}
     We apply~\eqref{eq: ramsey-bound} and then iterate~\eqref{eq: online bound} $k-3$ times to get
    \begin{eqnarray*}
        r_k(t,t) 
        &\stackrel{\eqref{eq: ramsey-bound}}{\leq}& 2^{(1+o(1))\tilde{r}_{k-1}(t-1,t-1)}\\
        &\stackrel{\eqref{eq: online bound}}{\leq}& 2^{T_{k-3}((1+o(1))\tilde{r}_2(t-k+2,t-k+2))}\\
        &\stackrel{\eqref{eq: CFS-2-uniform-online}}{\leq}& T_{k-2}\left((1+o(1))2t\binom{2(t-k+1)}{t-k+1}\right).
    \end{eqnarray*}
\end{proof}

Finally, we state and prove Corollary \ref{cor: recursive-improvement} in full generality, for $q$ colors.

\begin{corollary*}
\label{cor: recursive-improvement q-color}
    For all $\d \in (0,1]$ and $2 \leq k < t_1, \dots, t_q$, at least one of the following two inequalities holds:
    \begin{equation*}
        r_k(t_1, \dots, t_q) \leq q^{\frac{\binom{r_{k-1}(t_1-1,\ldots,t_q-1)}{k-1}}{\delta k}}
        \hspace{1cm}
        \text{or}
        \hspace{1cm}
        r_{k+1}(t_1+1, \dots, t_q+1) \leq q^{r_k(t_1, \dots, t_q)^{(1+o(1))\delta k}},
    \end{equation*}
    where the $o(1)$ term goes to $0$ as $\max_i\{t_i\}\to \infty$.
\end{corollary*}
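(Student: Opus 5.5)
Write $N = r_k(t_1,\dots,t_q)$, $M = r_{k-1}(t_1-1,\dots,t_q-1)$ and $\tilde m = \tilde r_k(t_1,\dots,t_q)$. The argument is a dichotomy on the size of $\tilde m$ relative to $N^{\delta k}$. The two tools are the $q$-color theorem above, applied once with $k$ and once with $k+1$, and the trivial comparison between online and offline Ramsey numbers.

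\textbf{Case 1: $\tilde m \le N^{(1+o(1))\delta k}$.} Apply \eqref{eq: q-color ramsey bound} with $k+1$ in place of $k$ and $t_c+1$ in place of $t_c$. This gives
\[
r_{k+1}(t_1+1,\dots,t_q+1) \le q^{\tilde m} + k - 1 \le q^{N^{(1+o(1))\delta k}},
\]
which is the second inequality. The additive $k-1$ is absorbed into the $o(1)$ in the exponent.

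\textbf{Case 2: $\tilde m > N^{(1+o(1))\delta k}$.} We show the first inequality holds. Builder can always win the $k$-uniform game by building all edges on $N$ vertices, so $\tilde m \le \binom{N}{k} \le N^k$. That bound is too weak here; we need an upper bound on $\tilde m$ in terms of $M$. Let $m' = \tilde r_{k-1}(t_1-1,\dots,t_q-1)$. Then $m' \le \binom{M}{k-1}$, again by building a complete $(k-1)$-graph on $M$ vertices and adding vertices one at a time. Now apply \eqref{eq: q-color online bound}:
\[
\tilde m \le m' q^{m'} \le \binom{M}{k-1}\, q^{\binom{M}{k-1}}.
\]
Combining this with the Case 2 hypothesis gives
\[
N^{(1+o(1))\delta k} < \binom{M}{k-1} q^{\binom{M}{k-1}} = q^{(1+o(1))\binom{M}{k-1}}.
\]
Taking $\delta k$-th roots yields $N \le q^{(1+o(1))\binom{M}{k-1}/(\delta k)}$.

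\textbf{Main obstacle: the $1+o(1)$ factors.} They must be distributed so that the first inequality comes out with no $o(1)$ at all, as the statement requires. To arrange this, I would set the threshold in the dichotomy at exactly $\tilde m = \binom{M}{k-1} q^{\binom{M}{k-1}}$ rather than at a clean power of $N$.
\begin{itemize}
\item If $N^{\delta k} \le \tilde m$, then $N^{\delta k} \le \binom{M}{k-1} q^{\binom{M}{k-1}}$, hence $N \le q^{\binom{M}{k-1}/(\delta k)}\cdot\bigl(\binom{M}{k-1}\bigr)^{1/(\delta k)}$.
\item Otherwise $\tilde m < N^{\delta k}$, and Case 1 gives the second inequality.
\end{itemize}
The stray polynomial factor $\bigl(\binom{M}{k-1}\bigr)^{1/(\delta k)}$ in the first bullet still has to be removed. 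The cleanest fix is to split at $\tilde m$ versus $N^{\delta k}/\mathrm{poly}$ and move that polynomial loss into the $o(1)$ of the second inequality. This works because $\log_q$ of a polynomial in $N$ is negligible against $N^{\delta k}$. Specifically:
\begin{itemize}
\item Compare $N^{\delta k}$ with $q^{\binom{M}{k-1}}$ directly. If $N^{\delta k} \le q^{\binom{M}{k-1}}$, the first inequality holds exactly.
\item Otherwise $q^{\binom{M}{k-1}} < N^{\delta k}$, so $\binom{M}{k-1} < \delta k \log_q N$. Then $\tilde m \le \binom{M}{k-1} q^{\binom{M}{k-1}} < N^{\delta k}\,\delta k\log_q N = N^{(1+o(1))\delta k}$, and Case 1 finishes.
\end{itemize}
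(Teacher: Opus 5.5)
Your final argument (the last two bullets) is correct and is essentially the paper's proof. It uses the same three ingredients: $r_{k+1}(t_1+1,\dots,t_q+1)\le q^{\tilde r_k(t_1,\dots,t_q)}+k-1$, $\tilde r_k(t_1,\dots,t_q)\le m'q^{m'}$, and $m'\le\binom{M}{k-1}$. The only difference is bookkeeping: the paper splits on $\tilde r_k$ versus $N^{\mu k}$ with $\mu=\delta(1+\log_q m'/m')$, so the factor $m'$ cancels exactly in the first alternative, whereas you split directly on whether the first inequality holds and absorb the factor $\delta k\log_q N$ into the $o(1)$ of the second (legitimate, since in that case $\delta k\ln N>\binom{M}{k-1}\ln q\to\infty$).
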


\begin{proof}
    The two cases depend on whether $\tilde{r}_k(t_1, \dots, t_q) \leq r_k(t_1, \dots, t_q)^{\mu k}$, where
    \[\mu=\delta\left(1+\frac{\log_q \tilde{r}_{k-1}(t_1-1, \dots, t_q-1)}{\tilde{r}_{k-1}(t_1-1, \dots, t_q-1)}\right)=(1+o(1))\delta.\]
    If this inequality holds, then
    \begin{eqnarray*}
        r_{k+1}(t_1+1, \dots, t_q+1) 
        &\stackrel{\eqref{eq: q-color ramsey bound}}{\leq}& q^{\tilde{r}_k(t_1, \dots, t_q)} + k-1\\
        &\leq& q^{r_k(t_1, \dots, t_q)^{(1+o(1))\delta k} }.
    \end{eqnarray*}
    Otherwise, we have $\tilde{r}_k(t_1, \dots, t_q) > r_k(t_1, \dots, t_q)^{\mu k}$, and so
    \begin{eqnarray*}
        r_k(t_1, \dots, t_q) 
        &\leq& \tilde{r}_k(t_1, \dots, t_q)^{1/\mu k}\\
        &\stackrel{\eqref{eq: q-color online bound}}{\leq}& q^{\left(\tilde{r}_{k-1}(t_1-1, \dots, t_q-1)+\log_q \tilde{r}_{k-1}(t_1-1, \dots, t_q-1)\right)/\mu k}\\
        &=& q^{\frac{\tilde{r}_{k-1}(t_1-1, \dots, t_q-1)}{\delta k}}\\
        &\leq& q^{\frac{\binom{r_{k-1}(t_1-1,\ldots,t_q-1)}{k-1}}{\delta k}},
    \end{eqnarray*}
    where in the last line we used the trivial bound $\tilde{r}_{k-1}(t_1-1, \dots, t_q-1) \leq \binom{r_{k-1}(t_1-1,\ldots,t_q-1)}{k-1}$.
    This completes the proof.
\end{proof}


\section{Concluding remarks}
\label{section: conclusion}

In this note, we showed that
\begin{equation*}
    r_k(s,t) \leq 2^{\tilde{r}_{k-1}(s-1,t-1)} + k - 2
    \qquad
    \text{and}
    \qquad
    \tilde{r}_k(s,t) \leq 2^{(1+o(1))\tilde{r}_{k-1}(s-1,t-1)}.
\end{equation*}
It would of course be desirable to improve on the trivial bound $\tilde{r}_{k-1}(s,t) \leq \binom{r_{k-1}(s,t))}{k-1}$ in such a way that 
Theorem~\ref{thm: online bound} gave a genuine improvement over~\eqref{eq: stepping-down} for all combinations $k < s \leq t$.
We believe that even something a bit stronger should be true (cf. the main result in~\cite{C09}).

\begin{problem}
    Show that for $2 \leq k < t$, we have that $\tilde{r}_k(t,t) = o\left(\binom{r_k(t,t)}{k}\right)$ as $t \to \infty$.
\end{problem}

Another natural question is whether our recursive bound for vertex online Ramsey numbers is sharp.
For example, is it true that $\tilde{r}_k(t,t) = 2^{\Theta(\tilde{r}_{k-1}(t-1,t-1))}$?
The stepping-up lemma does not adapt in an obvious way to provide a matching lower bound, and so a proof of such a statement appears to require some new ideas.

\subsection*{Acknowledgments}
We would like to thank Dhruv Mubayi and Vojt\v{e}ch R\"odl for insightful conversations.


\end{document}